\numberwithin{equation}{section}
\newtheorem{thm}{Theorem}[section]
\newtheorem{lemma}[thm]{Lemma}
\theoremstyle{definition}
\newtheorem{defn}[thm]{Definition}
\theoremstyle{remark}
\newcommand{\id}{\operatorname{id}}
\newcommand{\dom}{\mathrm{dom}}
\title[Realizing Semigroup C*-Algebras As Groupoid C*-Algebras]{Realizing Semigroup C*-Algebras As Groupoid C*-Algebras}
\date{13, Jun, 2019}
\author[H. Li]{Hui Li}
\address{Hui Li,
Research Center for Operator Algebras and Shanghai Key Laboratory of Pure Mathematics and Mathematical Practice, Department of Mathematics, East China Normal University, 3663 Zhongshan North Road, Putuo District, Shanghai 200062, China}
\email{lihui8605@hotmail.com}
\subjclass[2010]{46L05}
\keywords{C*-algebra; semigroup; groupoid of germs}
\thanks{The author was supported by National Natural Science Foundation of China (Grant No. 11801176), by Research Center for Operator Algebras of East China Normal University, and by Science and Technology Commission of Shanghai Municipality (Grant No. 18dz2271000).}
\begin{document}

\begin{abstract}
We show that semigroup C*-algebras are groupoid C*-algebras.
\end{abstract}

\maketitle

\section{Introduction}

Xin Li in \cite{Li12} introduced the semigroup C*-algebra which have connections to the number theory. Its definition is as follows.

Throughout the paper, fix a countable unital left cancellative semigroup $P$.

\begin{defn}\label{define semigp C*-alg}
For $p \in P$, we also denote by $p$ the left multiplication map $q \mapsto pq$. Denote the \emph{constructible right ideals} by
\[
\mathcal{J}(P):=\{p_1^{-1}q_1\cdots p_n^{-1}q_nP:n \geq 1, p_1,q_1,\dots, p_n,q_n \in P\} \cup \{\emptyset\}.
\]

Then the \emph{(full) semigroup C*-algebra} $C^*(P)$ of $P$ is the universal unital C*-algebra generated by a family of isometries $\{v_p\}_{p \in P}$ and a family of projections $\{e_X\}_{X \in \mathcal{J}(P)}$ satisfying the following relations.
\begin{enumerate}
\item\label{v_p v_q=v_pq} $v_p v_q=v_{pq}$ for all $p,q \in P$;
\item\label{v_P e_X v_p*}  $v_p e_X v_p^*=e_{pX}$ for all $p \in P,X \in \mathcal{J}(P)$;
\item $e_\emptyset=0_{C^*(P)}$ and $e_P=1_{C^*(P)}$;
\item\label{e_X e_Y} $e_X e_Y=e_{X \cap Y}$ for all $X,Y \in \mathcal{J}(P)$.
\end{enumerate}
Furthermore, define $C^{*(\vee)}(P)$ to be the universal unital C*-algebra generated by a family of isometries $\{w_p\}_{p \in P}$ and a family of projections $\{f_X\}_{X \in \mathcal{J}(P)}$ satisfying the above four conditions and a further condition.
\begin{enumerate}\setcounter{enumi}{4}
\item\label{e_X cup Y} For any $X,Y \in \mathcal{J}(P)$, if $X \cup Y \in \mathcal{J}(P)$, then $f_{X \cup Y}=f_X+f_Y-f_{X \cap Y}$.
\end{enumerate}
\end{defn}

One should distinguish the definition of $C^{*(\vee)}(P)$ and Xin Li's construction of $C^{*(\cup)}(P)$ in \cite[Definition~2.4]{Li12}. The point is that in $C^{*(\vee)}(P)$ in order to perform Condition~(\ref{e_X cup Y}) of Definition~\ref{define semigp C*-alg} we require that not only $X,Y \in \mathcal{J}(P)$ but also $X \cup Y \in \mathcal{J}(P)$. On the other hand, in $C^{*(\cup)}(P)$, the ideals taken into consideration are not only constructible right ideals $\mathcal{J}(P)$ but the finite union of elements of $\mathcal{J}(P)$. It is clear that $C^{*(\vee)}(P)$ is an intermediate C*-algebra between $C^*(P)$ and $C^{*(\cup)}(P)$.

In this short article, we aim to show that semigroup C*-algebras are (possibly non-Hausdorff) groupoid C*-algebras. In order to achieve these we are going to apply the groupoid of germs techniques and we inherit notation from \cite{MR2419901} (one can also refer to \cite{Paterson:Groupoidsinversesemigroups99} for the background of groupoids of germs).

\section{Main Results}

It is well-known that $S:=\{v_{p_1}^*v_{q_1}\cdots v_{p_n}^*v_{q_n}:n \geq 1, p_1,q_1,\dots, p_n,q_n \in P\} \cup \{0_{C^*(P)}\}$ is a unital inverse semigroup with the zero element. Denote by $E:=\{e_X:X \in \mathcal{J}(P)\}$ the set of all idempotents of $S$; denote by $\widehat{E}$ the spectrum of $E$ (notice that $\widehat{E}$ is actually compact because $E$ is unital); denote by $\mathcal{I}(\widehat{E})$ the set of all bijections between subsets of $\widehat{E}$; denote by $\theta:S \to \mathcal{I}(\widehat{E})$ the action given in \cite[Proposition~10.3]{MR2419901}; denote by $\mathcal{G}$ the groupoid of germs of $(\theta,S,\widehat{E})$; for $e \in E$, denote by $1_e \in C(\widehat{E})$ the characteristic function on $\dom(\theta_e)$; for $s \in S$, denote by $\Theta_s:=\{[s,\phi] : \phi \in \dom(\theta_{s^*s}) \}$ which is an open subset of $\mathcal{G}$, by $\delta_s 1_{s^*s}:=1_{s^*s} \circ s$ on $\Theta_s$ and vanishes outside $\Theta_s$, by $1_{ss^*} \delta_s :=1_{ss^*} \circ r$ on $\Theta_s$ and vanishes outside $\Theta_s$ (notice that $\delta_s 1_{s^*s}, 1_{ss^*} \delta_s$ may not be continuous on $\mathcal{G}$).

\cite[Proposition~7.5]{MR2419901} illustrates the operations of $C^*(\mathcal{G})$. Here are some additional useful facts about $C^*(\mathcal{G}): 1_{1_{C^*(P)}} \delta_{1_{C^*(P)}}=1_{C^*(\mathcal{G})}; 1_{0_{C^*(P)}} \delta_{0_{C^*(P)}}=0_{C^*(\mathcal{G})}$; for any $s,t \in S,\delta_s 1_{s^*s}=1_{ss^*} \delta_s;( 1_{ss^*} \delta_s)(1_{tt^*} \delta_t)=1_{(st)(st)^*}\delta_{st}$; and $\{1_{ss^*} \delta_s:s \in S\}$ generates $C^*(\mathcal{G})$.

\begin{thm}\label{C*(P) is gpoid C*-alg}
$C^*(P) \cong C^*(\mathcal{G})$.
\end{thm}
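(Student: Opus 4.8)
**Plan for proving $C^*(P) \cong C^*(\mathcal{G})$:**

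The strategy is the standard "universal property in both directions" argument: construct a $*$-homomorphism $\Phi: C^*(P) \to C^*(\mathcal{G})$ using the universal property of $C^*(P)$, construct an inverse $*$-homomorphism $\Psi: C^*(\mathcal{G}) \to C^*(P)$ using the universal property of the groupoid C*-algebra (equivalently, of $C^*(S)$ together with the fact that $C^*(\mathcal{G})$ is the quotient of $C^*(S)$ by the ideal forcing the idempotents $e_X$ to act as their images $1_{e_X}$ in $C(\widehat{E})$), and then check on generators that $\Psi \circ \Phi = \mathrm{id}$ and $\Phi \circ \Psi = \mathrm{id}$.

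For $\Phi$: I would set $V_p := 1_{v_p v_p^*}\delta_{v_p}$ for $p \in P$ and $E_X := 1_{e_X}\delta_{e_X} = 1_{e_X}$ (viewed inside $C^*(\mathcal{G})$, where these are just the characteristic functions $1_{e_X} \in C(\widehat{E}) \subseteq C^*(\mathcal{G})$). Using the multiplication rule $(1_{ss^*}\delta_s)(1_{tt^*}\delta_t) = 1_{(st)(st)^*}\delta_{st}$ quoted in the excerpt, one checks that the $V_p$ are isometries (since $v_p$ is an isometry in $S$, $v_p^* v_p = 1$), that relation (1) $V_pV_q = V_{pq}$ holds, that (2) $V_p E_X V_p^* = E_{pX}$ holds because $v_p e_X v_p^* = e_{pX}$ in $S$ and $\theta$ is an action, that $E_\emptyset = 0$ and $E_P = 1_{C^*(\mathcal{G})}$ (using $1_{1_{C^*(P)}}\delta_{1_{C^*(P)}} = 1_{C^*(\mathcal{G})}$), and that (4) $E_X E_Y = E_{X\cap Y}$ since in $C(\widehat{E})$ we have $1_{e_X} 1_{e_Y} = 1_{e_{X\cap Y}}$ by definition of $\widehat{E}$ as the spectrum of the semilattice $E$ (characters are multiplicative, and $e_X e_Y = e_{X\cap Y}$). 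Hence the universal property of $C^*(P)$ yields $\Phi$ with $\Phi(v_p) = V_p$ and $\Phi(e_X) = E_X$. Since $\{1_{ss^*}\delta_s : s \in S\}$ generates $C^*(\mathcal{G})$ and each such element is a finite product of the $V_p, V_p^*$ times an $E_X$, the map $\Phi$ is surjective.

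For $\Psi$: the elements $v_p \in C^*(P)$ already satisfy the defining relations of a representation of the inverse semigroup $S$ (indeed $S \subseteq C^*(P)$), and moreover the idempotents $e_X \in C^*(P)$ satisfy the same lattice relations as the functions $1_{e_X} \in C(\widehat{E})$; this is precisely the condition needed for a representation of $S$ to descend to a representation of the groupoid of germs $C^*(\mathcal{G})$ rather than just of $C^*(S)$ (cf. \cite[Proposition~7.5]{MR2419901} and the discussion of $C^*(\mathcal{G})$ as a quotient in \cite{MR2419901}). So there is a $*$-homomorphism $\Psi: C^*(\mathcal{G}) \to C^*(P)$ with $\Psi(1_{ss^*}\delta_s) = s$ for all $s \in S$, in particular $\Psi(V_p) = v_p$ and $\Psi(E_X) = e_X$. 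Then $\Psi \circ \Phi$ fixes the generators $v_p, e_X$ of $C^*(P)$, so $\Psi\circ\Phi = \mathrm{id}_{C^*(P)}$, and $\Phi\circ\Psi$ fixes the generators $1_{ss^*}\delta_s$ of $C^*(\mathcal{G})$, so $\Phi\circ\Psi = \mathrm{id}_{C^*(\mathcal{G})}$; therefore $\Phi$ is an isomorphism.

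The main obstacle is the construction of $\Psi$: one must verify carefully that the assignment $1_{ss^*}\delta_s \mapsto s$ is well-defined on $C^*(\mathcal{G})$, i.e.\ that it kills exactly the relations imposed in passing from the inverse semigroup (or from $C^*(S)$) to the groupoid of germs. Concretely this means checking that whenever a finite sum $\sum_i 1_{s_i s_i^*}\delta_{s_i}$ represents the zero function on $\mathcal{G}$, the corresponding sum $\sum_i s_i$ vanishes in $C^*(P)$; the key input is that the germ relation identifies $[s,\phi]$ with $[t,\phi]$ exactly when $\theta_s$ and $\theta_t$ agree near $\phi$, which on the algebraic side corresponds to $se_X = te_X$ for a suitable $e_X \in E$, and such identities already hold in $C^*(P)$ because there the $e_X$ generate the same commutative C*-algebra $C(\widehat{E})$ (the universal relations (3),(4) of Definition~\ref{define semigp C*-alg} are exactly the relations defining $C(\widehat{E})$). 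Once this compatibility is in hand, the rest is a routine check on generators; there are no positivity or norm estimates to worry about because both universal properties take care of the C*-norms automatically.
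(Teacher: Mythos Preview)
Your proposal is correct and follows essentially the same route as the paper: construct $\Phi:C^*(P)\to C^*(\mathcal{G})$ by sending $v_p\mapsto 1_{v_pv_p^*}\delta_{v_p}$ and $e_X\mapsto 1_{e_X}\delta_{e_X}$ and checking relations (1)--(4), construct the inverse via the universal property of $C^*(\mathcal{G})$ for representations of $S$, and verify the two compositions are identities on generators. The one place where the paper is more explicit than your sketch is the construction of the backward map: rather than arguing directly that the assignment $1_{ss^*}\delta_s\mapsto s$ is well defined on the level of functions on $\mathcal{G}$, the paper fixes a faithful representation $\rho:C^*(P)\to B(H)$, applies \cite[Theorem~10.14]{MR2419901} to the Hilbert-space representation $\rho\circ\iota$ of $S$ to obtain $h:C^*(\mathcal{G})\to B(H)$, observes that the image lands in $\rho(C^*(P))$, and takes $\rho^{-1}\circ h$---this sidesteps entirely the issue you flag in your ``main obstacle'' paragraph.
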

\begin{proof}
For $p \in P$, define $V_p:=1_{v_p v_p^*}\delta_{v_p}$. For $X \in \mathcal{J}(P)$, define $\epsilon_X:=1_{e_X}\delta_{e_X}$. With the help of \cite[Proposition~7.5]{MR2419901}, we get that $\{V_p\}_{p \in P}$ is a family of isometries and $\{\epsilon_X\}_{X \in \mathcal{J}(P)}$ is a family of projections of $C^*(\mathcal{G})$ satisfying Conditions~(\ref{v_p v_q=v_pq})--(\ref{e_X e_Y}) of Definition~\ref{define semigp C*-alg}. We verify Condition~(\ref{v_P e_X v_p*}) of Definition~\ref{define semigp C*-alg} as an example.

For $p \in P, X \in \mathcal{J}(P)$, we calculate that
\begin{align*}
V_p \epsilon_X V_p^*&=(1_{v_p v_p^*} \delta_{v_p})(1_{e_X} \delta_{e_X})(1_{v_p v_p^*} \delta_{v_p})^*
\\&=(1_{v_p v_p^*} \delta_{v_p})(1_{e_X} \delta_{e_X})(1_{v_p^* v_p} \delta_{v_p^*})
\\&=(1_{(v_p e_X)(v_p e_X)^*}\delta_{v_p e_X})(1_{v_p^* v_p} \delta_{v_p^*})
\\&=1_{(v_pe_Xv_p^*)(v_pe_Xv_p^*)^*}\delta_{v_pe_Xv_p^*}
\\&=1_{e_{pX}}\delta_{e_{pX}}
\\&=\epsilon_{pX}.
\end{align*}
So there exists a homomorphism $\pi:C^*(P) \to C^*(\mathcal{G})$ such that $\pi(v_p)=V_p,\pi(e_X)=\epsilon_X$ for all $p \in P, X \in \mathcal{J}(P)$.

Conversely, we observe that by the definition of $S, S$ is contained in $C^*(P)$. Denote by $\iota:S \to C^*(P)$ the embedding. Take a faithful representation $\rho:C^*(P) \to B(H)$. Denote by $\sigma:S \to C^*(\mathcal{G})$ the $*$-homomorphism given in \cite[Proposition~10.13]{MR2419901} such that $\sigma(s)=1_{ss^*}\delta_s$ for all $s \in S$. By \cite[Theorem~10.14]{MR2419901}, there exists a homomorphism $h:C^*(\mathcal{G}) \to B(H)$ such that $h \circ \sigma=\rho \circ \iota$. For any $s \in S$, we have $h(1_{ss^*} \delta_s)=h(\sigma(s))=\rho(\iota(s)) \in \rho(C^*(P))$. So $h(C^*(\mathcal{G})) \subset \rho(C^*(P))$. Hence we obtain a homomorphism $\rho^{-1}\circ h:C^*(\mathcal{G}) \to C^*(P)$.

For $p \in P$, we have
\[
\rho^{-1} \circ h \circ \pi(v_p)=\rho^{-1} \circ h (1_{v_p v_p^*}\delta_{v_p})=\rho^{-1} \circ h \circ \sigma(v_p)=\rho^{-1} \circ \rho \circ \iota(v_p)=v_p.
\]
For $X \in \mathcal{J}(P)$, we have
\[
\rho^{-1} \circ h \circ \pi(e_X)=\rho^{-1} \circ h (1_{e_X}\delta_{e_X})=\rho^{-1} \circ h \circ \sigma(e_X)=\rho^{-1} \circ \rho \circ \iota(e_X)=e_X.
\]
So $\rho^{-1} \circ h \circ \pi=\id_{C^*(P)}$.

On the other hand, for any $s \in S$, we have
\[\pi \circ \rho^{-1} \circ h(1_{ss^*}\delta_s)=\pi \circ \rho^{-1} \circ h \circ \sigma(s)=\pi \circ \rho^{-1} \circ \rho \circ \iota(s)=\pi(s).
\]
Write $s=v_{p_1}^*v_{q_1}\cdots v_{p_n}^*v_{q_n}$ for some $n \geq 1$, and $p_1,q_1,\dots, p_n,q_n \in P$. We compute that
\begin{align*}
\pi(s)&=(1_{v_{p_1}v_{p_1}^*} \delta_{v_{p_1}})^*(1_{v_{q_1}v_{q_1}^*} \delta_{v_{q_1}})\cdots(1_{v_{p_n}v_{p_n}^*} \delta_{v_{p_n}})^*(1_{v_{q_n}v_{q_n}^*} \delta_{v_{q_n}})
\\&=(1_{v_{p_1}^*v_{p_1}} \delta_{v_{p_1}^*})(1_{v_{q_1}v_{q_1}^*} \delta_{v_{q_1}})\cdots(1_{v_{p_n}^*v_{p_n}} \delta_{v_{p_n}^*})(1_{v_{q_n}v_{q_n}^*} \delta_{v_{q_n}})
\\&=(1_{(v_{p_1}^*v_{q_1})(v_{p_1}^*v_{q_1})^*} \delta_{v_{p_1}^*v_{q_1}})\cdots(1_{(v_{p_n}^*v_{q_n})(v_{p_n}^*v_{q_n})^*} \delta_{v_{p_n}^*v_{q_n}})
\\&=(1_{(v_{p_1}^*v_{q_1}\cdots v_{p_n}^*v_{q_n})(v_{p_1}^*v_{q_1}\cdots v_{p_n}^*v_{q_n})^*} \delta_{v_{p_1}^*v_{q_1}\cdots v_{p_n}^*v_{q_n}})
\\&=1_{ss^*}\delta_s.
\end{align*}
So $\pi \circ \rho^{-1} \circ h=\id_{C^*(\mathcal{G})}$. Hence $C^*(P) \cong C^*(\mathcal{G})$.
\end{proof}

In order to prove our second main theorem, we need a couple of lemmas and more notation.

\begin{lemma}\label{widehat{E}_vee inv}
Let $s \in S$ and let $\phi \in \widehat{E}$ such that for $X,Y \in \mathcal{J}(P)$, if $X \cup Y \in \mathcal{J}(P)$ then $\phi(e_{X \cup Y})=\phi(e_X)+\phi(e_Y)-\phi(e_{X \cap Y})$. Then for $X,Y \in \mathcal{J}(P)$, we have $X \cup Y \in \mathcal{J}(P) \implies \phi(s^*e_{X \cup Y}s)=\phi(s^* e_X s)+\phi(s^*e_Y s)-\phi(s^* e_{X \cap Y}s)$.
\end{lemma}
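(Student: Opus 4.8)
The plan is to reduce the claim about $s^{*}e_{\bullet}s$ to the defining additivity hypothesis on $\phi$ itself. The key observation is that each $s \in S$ acts on idempotents by a partial map, so that $s^{*}e_{Z}s$ should again be an idempotent of the form $e_{W}$ for a suitable $W \in \mathcal{J}(P)$ depending on $Z$, and this assignment $Z \mapsto W$ should commute with finite intersections and with those finite unions that stay inside $\mathcal{J}(P)$. Concretely, I would first record the elementary fact that for $s \in S$ and $X \in \mathcal{J}(P)$ one has $s^{*}e_{X}s = e_{Y}$ where $Y = s^{-1}(X) \cap \mathrm{dom}$ in the obvious sense; this is standard from the inverse-semigroup structure of $S$ (every element of $S E S$ is again an idempotent, and $s^{*} e_X s \in E$). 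So there is a function $\alpha_s \colon \mathcal{J}(P) \to \mathcal{J}(P)$, $e_{\alpha_s(X)} = s^{*}e_X s$.

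Next I would verify the two compatibility properties of $\alpha_s$. For intersections, $s^{*}e_{X\cap Y}s = s^{*}e_X e_Y s = s^{*}e_X (ss^{*}) e_Y s = (s^{*}e_X s)(s^{*}e_Y s) = e_{\alpha_s(X)}e_{\alpha_s(Y)} = e_{\alpha_s(X)\cap\alpha_s(Y)}$, using $e_X(ss^{*})e_X = e_X e_X = e_X$ together with the fact that $e_X$ and $e_Y$ commute and absorb $ss^{*}$ appropriately; hence $\alpha_s(X\cap Y) = \alpha_s(X)\cap\alpha_s(Y)$. For unions, one cannot multiply, but one can use that $\theta_s$ is a bijection between open subsets of $\widehat{E}$, so on the level of the spectrum $\alpha_s$ corresponds to taking preimages, which preserves unions; alternatively I would argue set-theoretically inside $P$ that $s^{-1}(X\cup Y) = s^{-1}(X)\cup s^{-1}(Y)$, so whenever $X\cup Y \in \mathcal{J}(P)$ we may hope $\alpha_s(X\cup Y)=\alpha_s(X)\cup\alpha_s(Y)$ — but the subtlety is that even if $X\cup Y\in\mathcal{J}(P)$ it is not a priori clear that $\alpha_s(X)\cup\alpha_s(Y)\in\mathcal{J}(P)$, so this identity must be interpreted carefully. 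The cleanest route is to avoid asserting $\alpha_s(X)\cup\alpha_s(Y)\in\mathcal{J}(P)$ and instead prove directly the numerical identity $e_{\alpha_s(X\cup Y)} + e_{\alpha_s(X\cap Y)} = e_{\alpha_s(X)} + e_{\alpha_s(Y)}$ by evaluating a faithful representation or by noting that $1_{\alpha_s(X\cup Y)} = 1_{\alpha_s(X)} + 1_{\alpha_s(Y)} - 1_{\alpha_s(X\cap Y)}$ as functions on $\widehat{E}$, which follows because pulling back along the partial bijection $\theta_s$ sends the characteristic function identity $1_{X\cup Y} = 1_X + 1_Y - 1_{X\cap Y}$ (valid pointwise) to the corresponding identity for the $\alpha_s$-images on $\mathrm{dom}(\theta_{s^{*}s})$.

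With those two facts in hand the conclusion is immediate: assuming $X\cup Y \in \mathcal{J}(P)$, apply $\phi$ to the relation just described, giving
\[
\phi(s^{*}e_{X\cup Y}s) = \phi(e_{\alpha_s(X\cup Y)}) = \phi(e_{\alpha_s(X)}) + \phi(e_{\alpha_s(Y)}) - \phi(e_{\alpha_s(X\cap Y)}),
\]
where the middle step uses the hypothesis on $\phi$ applied to the pair $\alpha_s(X), \alpha_s(Y)$ — which is legitimate precisely because the function-level identity for characteristic functions shows the relevant union behaves additively even without $\alpha_s(X)\cup\alpha_s(Y)$ itself lying in $\mathcal{J}(P)$ — and then rewrite $\phi(e_{\alpha_s(X)}) = \phi(s^{*}e_X s)$ and similarly for the other two terms.

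I expect the main obstacle to be exactly the union step: making precise in what sense $\alpha_s$ preserves unions when the target of the union operation might leave $\mathcal{J}(P)$. The safe and rigorous way is to work on $\widehat{E}$ with characteristic functions rather than with abstract idempotents, since there the additivity $1_{X\cup Y}=1_X+1_Y-1_{X\cap Y}$ is a pointwise triviality and composition with the partial homeomorphism $\theta_s$ is an order- and lattice-compatible operation; the hypothesis on $\phi$ is then simply the statement that $\phi$, viewed as a point of $\widehat{E}$, lies in the subset where this particular additivity relation holds, and the lemma says this subset is invariant under the dual action of $s$. Everything else is the routine inverse-semigroup bookkeeping sketched above.
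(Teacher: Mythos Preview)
Your proposal has a genuine gap at exactly the union step you flagged as delicate. The characteristic-function workaround does not go through: the identity $1_{X\cup Y}=1_X+1_Y-1_{X\cap Y}$ is a triviality for characteristic functions of subsets of $P$, but the objects $1_{e_X}$ here are functions on $\widehat{E}$, and there the corresponding identity $1_{e_{X\cup Y}}=1_{e_X}+1_{e_Y}-1_{e_{X\cap Y}}$ \emph{fails} in general --- its failure locus is precisely $\widehat{E}\setminus\widehat{E}_\vee$. There is thus no valid pointwise relation to pull back along $\theta_s$, and your middle step (applying the hypothesis on $\phi$ to the pair $\alpha_s(X),\alpha_s(Y)$) remains unjustified unless you already know $\alpha_s(X)\cup\alpha_s(Y)\in\mathcal{J}(P)$.

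The irony is that you had the correct argument and discarded it. You note the set-theoretic identity $\alpha_s(X\cup Y)=\alpha_s(X)\cup\alpha_s(Y)$, then worry that the right-hand side might not lie in $\mathcal{J}(P)$. But the left-hand side $\alpha_s(X\cup Y)$ \emph{does} lie in $\mathcal{J}(P)$, since $X\cup Y\in\mathcal{J}(P)$ by assumption and $\alpha_s$ maps $\mathcal{J}(P)$ into $\mathcal{J}(P)$; the equality of sets then forces the right-hand side into $\mathcal{J}(P)$ as well, so the hypothesis on $\phi$ applies directly to the pair $\alpha_s(X),\alpha_s(Y)$. This is exactly the paper's proof: writing $s=v_{p_1}^*v_{q_1}\cdots v_{p_n}^*v_{q_n}$ one has $s^*e_Zs=e_{q_n^{-1}p_n\cdots q_1^{-1}p_1 Z}$, so $\alpha_s$ is a composite of preimages and of direct images under left multiplications (injective by left cancellativity), both of which preserve arbitrary unions and intersections of subsets of $P$. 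Hence $\alpha_s(X)\cup\alpha_s(Y)=\alpha_s(X\cup Y)\in\mathcal{J}(P)$ and $\alpha_s(X)\cap\alpha_s(Y)=\alpha_s(X\cap Y)$, and the lemma follows immediately.
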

\begin{proof}
Write $s=v_{p_1}^*v_{q_1}\cdots v_{p_n}^* v_{q_n}$ for some $n \geq 1,p_1,q_1,\dots,p_n,q_n \in P$. Fix $X,Y \in \mathcal{J}(P)$ with $X \cup Y \in \mathcal{J}(P)$. Then we compute that
\begin{align*}
\phi(s^*e_{X \cup Y}s)&=\phi(v_{q_n}^*v_{p_n}\cdots v_{q_1}^* v_{p_1}e_{X \cup Y}v_{p_1}^*v_{q_1}\cdots v_{p_n}^* v_{q_n})
\\&=\phi(e_{q_n^{-1}p_n\cdots q_1^{-1}p_1 (X \cup Y)})
\\&=\phi(e_{q_n^{-1}p_n\cdots q_1^{-1}p_1 X \cup q_n^{-1}p_n\cdots q_1^{-1}p_1 Y})
\\&=\phi(e_{q_n^{-1}p_n\cdots q_1^{-1}p_1 X})+\phi(e_{q_n^{-1}p_n\cdots q_1^{-1}p_1 Y})-\phi(e_{q_n^{-1}p_n\cdots q_1^{-1}p_1 X \cap q_n^{-1}p_n\cdots q_1^{-1}p_1 Y})
\\& (\text{by the hypothesis})
\\&=\phi(e_{q_n^{-1}p_n\cdots q_1^{-1}p_1 X})+\phi(e_{q_n^{-1}p_n\cdots q_1^{-1}p_1 Y})-\phi(e_{q_n^{-1}p_n\cdots q_1^{-1}p_1 (X \cap Y)})
\\& (\text{by the left cancellative of $P$})
\\&=\phi(s^* e_X s)+\phi(s^*e_Y s)-\phi(s^* e_{X \cap Y}s).
\end{align*}
So we are done.
\end{proof}

Denote by $\widehat{E}_{\vee}:=\{\phi \in \widehat{E}:\forall X,Y \in \mathcal{J}(P), X \cup Y \in \mathcal{J}(P) \implies \phi(e_{X \cup Y})=\phi(e_X)+\phi(e_Y)-\phi(e_{X \cap Y})\}$. Then $\widehat{E}_{\vee}$ is a closed invariant subset of $\widehat{E}$ by Lemma~\ref{widehat{E}_vee inv}. Denote by $\mathcal{I}(\widehat{E}_{\vee})$ the set of all bijections between subsets of $\widehat{E}_{\vee}$ and we obtain an action $\theta^{\vee}:S \to \mathcal{I}(\widehat{E}_{\vee})$ such that $\theta^{\vee}_s=\theta_s \vert_{\dom(\theta_s) \cap \widehat{E}_{\vee}}$. Denote by $\mathcal{G}_{\vee}$ the groupoid of germs of $(\theta^{\vee},S,\widehat{E}_{\vee})$. For $e \in E$, denote by $1^\vee_e \in C(\widehat{E}_{\vee})$ the characteristic function on $\dom(\theta^{\vee}_e)$; for $s \in S$, denote by $\Theta^{\vee}_s:=\{[s,\phi] : \phi \in \dom(\theta^{\vee}_{s^*s}) \}$ which is an open subset of $\mathcal{G}_{\vee}$, by $\delta_s 1^\vee_{s^*s}:=1^\vee_{s^*s} \circ s$ on $\Theta^{\vee}_s$ and vanishes outside $\Theta^{\vee}_s$, by $1^\vee_{ss^*} \delta_s :=1^\vee_{ss^*} \circ r$ on $\Theta^{\vee}_s$ and vanishes outside $\Theta^{\vee}_s$. We have $1^\vee_{1_{C^*(P)}} \delta_{1_{C^*(P)}}=1_{C^*(\mathcal{G}_{\vee})}; 1^\vee_{0_{C^*(P)}} \delta_{0_{C^*(P)}}=0_{C^*(\mathcal{G}_{\vee})}$; for any $s,t \in S,\delta_s 1^\vee_{s^*s}=1^\vee_{ss^*} \delta_s;( 1^\vee_{ss^*} \delta_s)(1^\vee_{tt^*} \delta_t)=1^\vee_{(st)(st)^*}\delta_{st}$; and $\{1^\vee_{ss^*} \delta_s:s \in S\}$ generates $C^*(\mathcal{G}_{\vee})$.

\begin{lemma}\label{characterization of supp in widehat{E}_vee}
Let $\sigma$ be a representation of $S$ on a Hilbert space $H$. Then $\sigma$ is supported in $\widehat{E}_\vee$ in the sense of \cite[Definition~10.11]{MR2419901} if and only if $\sigma(e_{X \cup Y})=\sigma(e_X)+\sigma(e_Y)-\sigma(e_{X \cap Y})$ for all $X,Y \in \mathcal{J}(P)$ with $X \cup Y\in \mathcal{J}(P)$.
\end{lemma}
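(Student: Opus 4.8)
The plan is to unwind \cite[Definition~10.11]{MR2419901}. Restricting $\sigma$ to the idempotent semilattice $E$ and using the Gelfand isomorphism $C^*(E) \cong C(\widehat{E})$, one gets a $*$-homomorphism $\bar{\sigma}: C(\widehat{E}) \to B(H)$ characterized by $\bar{\sigma}(1_e) = \sigma(e)$ for $e \in E$ (here one uses the description of $\widehat{E}$ and of the maps $\theta_e$ from \cite[Proposition~10.3]{MR2419901}, under which the Gelfand transform of the generator $e$ is exactly the function $1_e$, so that $1_e(\phi)=\phi(e)$). By \cite[Definition~10.11]{MR2419901}, saying that $\sigma$ is supported in the closed set $\widehat{E}_{\vee}$ means precisely that $\bar{\sigma}$ annihilates the ideal $I := \{f \in C(\widehat{E}) : f\vert_{\widehat{E}_{\vee}} = 0\}$. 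Once the statement is rephrased this way, both implications reduce to understanding which functions lie in $\ker \bar{\sigma}$. For each pair $X,Y \in \mathcal{J}(P)$ with $X \cup Y \in \mathcal{J}(P)$ set $g_{X,Y} := 1_{e_{X \cup Y}} - 1_{e_X} - 1_{e_Y} + 1_{e_{X \cap Y}} \in C(\widehat{E})$; since $g_{X,Y}(\phi) = \phi(e_{X \cup Y}) - \phi(e_X) - \phi(e_Y) + \phi(e_{X \cap Y})$, the very definition of $\widehat{E}_{\vee}$ says that $g_{X,Y}$ vanishes on $\widehat{E}_{\vee}$, i.e.\ $g_{X,Y} \in I$, and moreover that $\widehat{E}_{\vee}$ is exactly the common zero set of the family $\{g_{X,Y}\}$.

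The forward implication is then immediate: if $\sigma$ is supported in $\widehat{E}_{\vee}$ then $\bar{\sigma}(g_{X,Y}) = 0$, and applying $\bar{\sigma}(1_e) = \sigma(e)$ turns this into the identity $\sigma(e_{X \cup Y}) = \sigma(e_X) + \sigma(e_Y) - \sigma(e_{X \cap Y})$.

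For the converse, the crux is to show that the closed ideal $J$ of $C(\widehat{E})$ generated by $\{g_{X,Y} : X,Y \in \mathcal{J}(P),\ X \cup Y \in \mathcal{J}(P)\}$ coincides with $I$. Since each $g_{X,Y}$ lies in the closed ideal $I$, we have $J \subseteq I$. For the reverse inclusion I would invoke the standard description of closed ideals of $C(Z_0)$ for compact Hausdorff $Z_0$: such an ideal $J$ equals $\{f : f\vert_Z = 0\}$, where $Z$ is the common zero set of $J$. On the one hand, by continuity every element of $J$ vanishes on $\widehat{E}_{\vee}$, so $\widehat{E}_{\vee} \subseteq Z$; on the other hand $Z$ is contained in the common zero set of the particular functions $g_{X,Y}$, which is $\widehat{E}_{\vee}$, so $Z \subseteq \widehat{E}_{\vee}$. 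Hence $Z = \widehat{E}_{\vee}$ and $J = \{f : f\vert_{\widehat{E}_{\vee}} = 0\} = I$. Granting this, the hypothesis says exactly that $\bar{\sigma}(g_{X,Y}) = 0$ for all admissible $X,Y$, so $\ker \bar{\sigma}$, being a closed ideal, contains $J = I$; that is, $\sigma$ is supported in $\widehat{E}_{\vee}$.

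The step I expect to require the most care is this identification $J = I$ in the converse, together with the bookkeeping that precedes it: correctly reading off \cite[Definition~10.11]{MR2419901}, passing to $\bar{\sigma}$ via Gelfand duality with $\bar{\sigma}(1_e)=\sigma(e)$, and checking that the common zero set of the generators $g_{X,Y}$ is precisely $\widehat{E}_{\vee}$. The rest is formal manipulation with the relation $\bar{\sigma}(1_e)=\sigma(e)$ and the fact that $\ker\bar\sigma$ is a closed ideal.
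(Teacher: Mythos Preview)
Your proposal is correct, and the overall shape matches the paper: both pass from $\sigma$ to the induced representation of $C(\widehat{E})$ (the paper calls it $\pi_\sigma$, you call it $\bar\sigma$), both note that each $g_{X,Y}:=1_{e_{X\cup Y}}+1_{e_{X\cap Y}}-1_{e_X}-1_{e_Y}$ lies in $C_0(\widehat{E}\setminus\widehat{E}_\vee)$, and both deduce the forward implication immediately from that.

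The converse is where the two arguments differ. The paper invokes Stone--Weierstrass to prove that the $C^*$-\emph{subalgebra} generated by the $g_{X,Y}$ is all of $C_0(\widehat{E}\setminus\widehat{E}_\vee)$; this forces a verification that the family $\{g_{X,Y}\}$ separates points of $\widehat{E}\setminus\widehat{E}_\vee$, which the paper handles by a small trick (given $\phi_1\ne\phi_2$ with $\phi_1(e_Z)=1$, $\phi_2(e_Z)=0$, replace $X,Y$ by $Z\cap X,\,Z\cap Y$). You instead use the hull--kernel correspondence for closed ideals of $C(\widehat{E})$: the closed \emph{ideal} $J$ generated by the $g_{X,Y}$ has zero set equal to the common zero set of the generators, which is $\widehat{E}_\vee$ by definition, so $J=I$; since $\ker\bar\sigma$ is a closed ideal containing each $g_{X,Y}$, it contains $J=I$. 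Your route is shorter and avoids the point-separation check altogether; the paper's Stone--Weierstrass argument gives the formally stronger conclusion that the $g_{X,Y}$ generate $C_0(\widehat{E}\setminus\widehat{E}_\vee)$ as a $C^*$-algebra and not merely as an ideal, but that extra strength is not used anywhere.
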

\begin{proof}
Denote by $\pi_\sigma$ the representation of $C(\widehat{E})$ induced from $\sigma$ from \cite[Proposition~10.6]{MR2419901}. Suppose that $\sigma$ is supported in $\widehat{E}_\vee$. Fix $X,Y \in \mathcal{J}(P)$ with $X \cup Y\in \mathcal{J}(P)$. By the definition of $\widehat{E}_\vee$, we have $1_{e_{X \cup Y}}+1_{e_{X \cap Y}}-1_{e_X}-1_{e_Y} \in C_0(\widehat{E} \setminus \widehat{E}_\vee)$. Since $\sigma$ is supported in $\widehat{E}_\vee$, we obtain $\pi_\sigma(1_{e_{X \cup Y}}+1_{e_{X \cap Y}}-1_{e_X}-1_{e_Y})=0$. So $\sigma(e_{X \cup Y})=\sigma(e_X)+\sigma(e_Y)-\sigma(e_{X \cap Y})$.

Conversely, suppose that $\sigma(e_{X \cup Y})=\sigma(e_X)+\sigma(e_Y)-\sigma(e_{X \cap Y})$ for all $X,Y \in \mathcal{J}(P)$ with $X \cup Y\in \mathcal{J}(P)$. Then $1_{e_{X \cup Y}}+1_{e_{X \cap Y}}-1_{e_X}-1_{e_Y} \in \ker(\pi_\sigma)$ for all $X,Y \in \mathcal{J}(P)$ with $X \cup Y\in \mathcal{J}(P)$. Notice that $1_{e_{X \cup Y}}+1_{e_{X \cap Y}}-1_{e_X}-1_{e_Y} \in C_0(\widehat{E} \setminus \widehat{E}_\vee)$ for all $X,Y \in \mathcal{J}(P)$ with $X \cup Y\in \mathcal{J}(P)$. So in order to show that $\sigma$ is supported in $\widehat{E}_\vee$, we invoke the Stone-Weierstrass theorem to prove that $A:=C^*(1_{e_{X \cup Y}}+1_{e_{X \cap Y}}-1_{e_X}-1_{e_Y} :X,Y ,X \cup Y\in \mathcal{J}(P))=C_0(\widehat{E} \setminus \widehat{E}_\vee)$. Fix $\phi \in \widehat{E} \setminus \widehat{E}_\vee$. Then there exist $X,Y \in \mathcal{J}(P)$ with $X \cup Y \in \mathcal{J}(P)$ such that $\phi(e_{X \cup Y})+\phi(e_{X \cap Y}) \neq \phi(e_X)+\phi(e_Y)$. So $(1_{e_{X \cup Y}}+1_{e_{X \cap Y}}-1_{e_X}-1_{e_Y})(\phi) \neq 0$. Fix $\phi_1\neq \phi_2 \in \widehat{E} \setminus \widehat{E}_\vee$. Then there exists $Z \in \mathcal{J}(P)$ such that $\phi_1(e_Z) \neq \phi_2(e_Z)$. Without loss of generality, we may assume that $\phi_1(e_Z)=1$ and $\phi_2(e_Z)=0$. Since $\phi_1 \notin \widehat{E}_\vee$, there exist $X,Y \in \mathcal{J}(P)$ with $X \cup Y \in \mathcal{J}(P)$ such that $\phi_1(e_{X \cup Y})+\phi_1(e_{X \cap Y}) \neq \phi_1(e_X)+\phi_1(e_Y)$. Observe that $Z \cap X,Z \cap Y,(Z \cap X) \cup (Z \cap Y) \in \mathcal{J}(P)$. So $(1_{e_{(Z \cap X) \cup (Z \cap Y)}}+1_{e_{(Z \cap X) \cap (Z \cap Y)}}-1_{e_{Z \cap X}}-1_{e_{Z \cap Y}})(\phi_1)=1$ and $(1_{e_{(Z \cap X) \cup (Z \cap Y)}}+1_{e_{(Z \cap X) \cap (Z \cap Y)}}-1_{e_{Z \cap X}}-1_{e_{Z \cap Y}})(\phi_2)=0$. Therefore we are done.
\end{proof}

\begin{thm}
$C^{*(\vee)}(P) \cong C^*(\mathcal{G}_\vee)$.
\end{thm}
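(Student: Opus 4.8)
The plan is to mimic the proof of Theorem~\ref{C*(P) is gpoid C*-alg} almost verbatim, replacing $C^*(P)$ with $C^{*(\vee)}(P)$, $\mathcal{G}$ with $\mathcal{G}_\vee$, and the various objects $1_e,\delta_s,\Theta_s$ with their $\vee$-decorated analogues; the only genuinely new ingredient is checking that the extra relation~(\ref{e_X cup Y}) of Definition~\ref{define semigp C*-alg} is respected in both directions, and this is exactly what Lemmas~\ref{widehat{E}_vee inv} and \ref{characterization of supp in widehat{E}_vee} were proved for. First I would set $W_p:=1^\vee_{w_pw_p^*}\delta_{w_p}$ and $\epsilon^\vee_X:=1^\vee_{f_X}\delta_{f_X}$ in $C^*(\mathcal{G}_\vee)$; since $S$ and $E$ live inside $C^{*(\vee)}(P)$ just as they do inside $C^*(P)$, the computations from \cite[Proposition~7.5]{MR2419901} show these satisfy Conditions~(\ref{v_p v_q=v_pq})--(\ref{e_X e_Y}) verbatim. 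For Condition~(\ref{e_X cup Y}), one uses that for $\phi\in\widehat{E}_\vee$ and $X,Y,X\cup Y\in\mathcal{J}(P)$ we have $1^\vee_{f_{X\cup Y}}=1^\vee_{f_X}+1^\vee_{f_Y}-1^\vee_{f_{X\cap Y}}$ as functions on $\widehat{E}_\vee$ by the very definition of $\widehat{E}_\vee$, hence $\epsilon^\vee_{X\cup Y}=\epsilon^\vee_X+\epsilon^\vee_Y-\epsilon^\vee_{X\cap Y}$ in $C^*(\mathcal{G}_\vee)$; this yields a homomorphism $\pi^\vee:C^{*(\vee)}(P)\to C^*(\mathcal{G}_\vee)$ with $\pi^\vee(w_p)=W_p$, $\pi^\vee(f_X)=\epsilon^\vee_X$.

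For the reverse direction, fix a faithful representation $\rho:C^{*(\vee)}(P)\to B(H)$, let $\iota:S\to C^{*(\vee)}(P)$ be the embedding, and consider the representation $\rho\circ\iota$ of $S$ on $H$. The key point is that because $C^{*(\vee)}(P)$ satisfies relation~(\ref{e_X cup Y}), the representation $\rho\circ\iota$ satisfies $(\rho\circ\iota)(e_{X\cup Y})=(\rho\circ\iota)(e_X)+(\rho\circ\iota)(e_Y)-(\rho\circ\iota)(e_{X\cap Y})$ whenever $X,Y,X\cup Y\in\mathcal{J}(P)$, so by Lemma~\ref{characterization of supp in widehat{E}_vee} it is supported in $\widehat{E}_\vee$. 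Then \cite[Proposition~10.13]{MR2419901} and \cite[Theorem~10.14]{MR2419901} (applied to the groupoid $\mathcal{G}_\vee$ coming from the action $\theta^\vee$ on $\widehat{E}_\vee$) furnish a $*$-homomorphism $\sigma^\vee:S\to C^*(\mathcal{G}_\vee)$ with $\sigma^\vee(s)=1^\vee_{ss^*}\delta_s$ and a homomorphism $h^\vee:C^*(\mathcal{G}_\vee)\to B(H)$ with $h^\vee\circ\sigma^\vee=\rho\circ\iota$; since $\{1^\vee_{ss^*}\delta_s:s\in S\}$ generates $C^*(\mathcal{G}_\vee)$, we get $h^\vee(C^*(\mathcal{G}_\vee))\subseteq\rho(C^{*(\vee)}(P))$ and hence a homomorphism $\rho^{-1}\circ h^\vee:C^*(\mathcal{G}_\vee)\to C^{*(\vee)}(P)$.

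Finally, one checks the two composites are identities exactly as in Theorem~\ref{C*(P) is gpoid C*-alg}: $\rho^{-1}\circ h^\vee\circ\pi^\vee$ fixes each $w_p$ and each $f_X$ by the same chain of equalities $\rho^{-1}\circ h^\vee(1^\vee_{w_pw_p^*}\delta_{w_p})=\rho^{-1}\circ h^\vee\circ\sigma^\vee(w_p)=\rho^{-1}\circ\rho\circ\iota(w_p)=w_p$, so it is $\id_{C^{*(\vee)}(P)}$ on generators; and $\pi^\vee\circ\rho^{-1}\circ h^\vee$ sends each generator $1^\vee_{ss^*}\delta_s$ to $\pi^\vee(s)$, which equals $1^\vee_{ss^*}\delta_s$ after the same word-by-word expansion $s=v_{p_1}^*v_{q_1}\cdots v_{p_n}^*v_{q_n}$ and repeated use of $(1^\vee_{ss^*}\delta_s)(1^\vee_{tt^*}\delta_t)=1^\vee_{(st)(st)^*}\delta_{st}$, so it is $\id_{C^*(\mathcal{G}_\vee)}$. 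I do not expect any serious obstacle: the whole argument is a routine transcription of Theorem~\ref{C*(P) is gpoid C*-alg}, and the only place where relation~(\ref{e_X cup Y}) enters is (i) verifying it holds for the $\epsilon^\vee_X$ in $C^*(\mathcal{G}_\vee)$, which follows directly from the definition of $\widehat{E}_\vee$, and (ii) invoking Lemma~\ref{characterization of supp in widehat{E}_vee} to land the representation $\rho\circ\iota$ in the supported-in-$\widehat{E}_\vee$ situation required to apply \cite[Theorem~10.14]{MR2419901}. The mildest care needed is to make sure \cite[Proposition~10.13]{MR2419901} and \cite[Theorem~10.14]{MR2419901} are stated relative to an arbitrary closed invariant subset (here $\widehat{E}_\vee$) and its restricted groupoid $\mathcal{G}_\vee$, which is precisely the content of the ``supported in'' formalism of \cite[Section~10]{MR2419901}.
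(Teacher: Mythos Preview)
Your proposal is correct and follows the same strategy as the paper; the only visible difference is that the paper verifies Condition~(\ref{e_X cup Y}) by an explicit case analysis on germs $[s,\phi]\in\mathcal{G}_\vee$, whereas you argue more directly that the identity $1^\vee_{e_{X\cup Y}}=1^\vee_{e_X}+1^\vee_{e_Y}-1^\vee_{e_{X\cap Y}}$ already holds on $\widehat{E}_\vee$ by definition. One small notational correction: since the inverse semigroup $S$ is defined as a subset of $C^*(P)$, the symbols $\delta_s$ and $1^\vee_e$ must be indexed by $v_p,e_X$ (not $w_p,f_X$), and your map $\iota:S\to C^{*(\vee)}(P)$ is not an embedding but the composite of the genuine inclusion $\iota:S\hookrightarrow C^*(P)$ with the quotient $q:C^*(P)\to C^{*(\vee)}(P)$---which is exactly how the paper writes it.
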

\begin{proof}
For $p \in P$, define $W_p:=1^\vee_{v_p v_p^*}\delta_{v_p}$. For $X \in \mathcal{J}(P)$, define $F_X:=1^\vee_{e_X}\delta_{e_X}$. With almost the same proof as Theorem~\ref{C*(P) is gpoid C*-alg}, $\{W_p\}_{p \in P}$ is a family of isometries and $\{F_X\}_{X \in \mathcal{J}(P)}$ is a family of projections of $C^*(\mathcal{G}_{\vee})$ satisfying Conditions~(\ref{v_p v_q=v_pq})--(\ref{e_X e_Y}) of Definition~\ref{define semigp C*-alg}. We verify Condition~(\ref{e_X cup Y}) of Definition~\ref{define semigp C*-alg}. Fix $X, Y \in \mathcal{J}(P)$ with $X \cup Y \in \mathcal{J}(P)$ and fix $[s,\phi] \in \mathcal{G}_\vee$. If $[s,\phi] \notin \Theta^\vee_{e_{X \cup Y}}$, then by \cite[Proposition~7.6]{MR2419901}, $[s,\phi] \notin \Theta^\vee_{e_{X}},\Theta^\vee_{e_{Y}},\Theta^\vee_{e_{X \cap Y}}$. It follows that $F_{X \cup Y}([s,\phi])=(F_X+F_Y-F_{X \cap Y})([s,\phi])=0$. So we may assume that $[s,\phi] \in \Theta^\vee_{e_{X \cup Y}}$. Then $\phi(e_{X \cup Y})=1$ and there exists $Z \in \mathcal{J}(P)$ such that $\phi(e_Z)=1$ and $e_{X \cup Y}e_Z=s e_Z$. Since $\phi \in \widehat{E}_\vee,\phi(e_X)+\phi(e_Y)-\phi(e_{X \cap Y})=1$. So $\phi(e_X)$ and $\phi(e_Y)$ can not be zero at the same time.

Case 1. $\phi(e_X)=\phi(e_Y)=1$. Then $\phi(e_{X \cap Y})=1$. Notice that $e_W (e_Z e_W)=s(e_Z e_W)$ for all $\mathcal{J}(P) \ni W \subset X \cup Y$. So $[s,\phi] \in \Theta^\vee_{e_{X}},\Theta^\vee_{e_{Y}},\Theta^\vee_{e_{X \cap Y}}$. Hence $F_{X \cup Y}([s,\phi])=(F_X+F_Y-F_{X \cap Y})([s,\phi])=1$.

Case 2. $\phi(e_X)=1,\phi(e_Y)=0$. Then $\phi(e_{X \cap Y})=0$. So $[s,\phi] \in \Theta^\vee_{e_{X}},[s,\phi] \notin \Theta^\vee_{e_{Y}},\Theta^\vee_{e_{X \cap Y}}$. Hence $F_{X \cup Y}([s,\phi])=(F_X+F_Y-F_{X \cap Y})([s,\phi])=1$.

Case 3. $\phi(e_X)=0,\phi(e_Y)=1$. Then $\phi(e_{X \cap Y})=0$. So $[s,\phi] \in \Theta^\vee_{e_{Y}},[s,\phi] \notin \Theta^\vee_{e_{X}},\Theta^\vee_{e_{X \cap Y}}$. Hence $F_{X \cup Y}([s,\phi])=(F_X+F_Y-F_{X \cap Y})([s,\phi])=1$.

We conclude that $F_{X \cup Y}=F_X+F_Y-F_{X \cap Y}$. So there exists a homomorphism $\pi:C^{*(\vee)}(P) \to C^*(\mathcal{G}_\vee)$ such that $\pi(w_p)=W_p,\pi(f_X)=F_X$ for all $p \in P, X \in \mathcal{J}(P)$.

Conversely, denote by $\iota:S \to C^*(P)$ the embedding, denote by $q:C^*(P) \to C^{*(\vee)}(P)$ the quotient map, and take a faithful representation $\rho:C^{*(\vee)}(P) \to B(H)$. Then $\sigma:=\rho \circ q \circ \iota$ is a representation of $S$ on $H$. By Condition~(\ref{e_X cup Y}) of Definition~\ref{define semigp C*-alg}, $\sigma(e_{X \cup Y})=\sigma(e_X)+\sigma(e_Y)-\sigma(e_{X \cap Y})$ for all $X,Y \in \mathcal{J}(P)$ with $X \cup Y\in \mathcal{J}(P)$. By Lemma~\ref{characterization of supp in widehat{E}_vee}, $\sigma$ is supported in $\widehat{E}_\vee$. Denote by $\sigma^\vee:S \to C^*(\mathcal{G}_\vee)$ the $*$-homomorphism given in \cite[Proposition~10.13]{MR2419901} such that $\sigma^\vee(s)=1^\vee_{ss^*}\delta_s$ for all $s \in S$. By \cite[Theorem~10.14]{MR2419901}, there exists a homomorphism $h:C^*(\mathcal{G}_\vee) \to B(H)$ such that $h\circ \sigma^\vee=\sigma$.

Similarly to Theorem~\ref{C*(P) is gpoid C*-alg}, we can prove that $\pi\circ\rho^{-1}\circ h=\id_{C^*(\mathcal{G}_\vee)}$ and $\rho^{-1}\circ h\circ \pi=C^{*(\vee)}(P)$. Hence $C^{*(\vee)}(P) \cong C^*(\mathcal{G}_\vee)$.
\end{proof}

\end{document}